\newtheorem{Thm}{Theorem}
\newtheorem{lem}{Lemma}
\newtheorem{cor}{Corollary}
\newtheorem{prop}{Proposition}
\newtheorem{Problem}{Problem}
\newcommand{\Ocal}{\mathcal{O}}
\begin{document}
\date{June 24, 2016}

\title{Products of two proportional primes}
\author{Pieter Moree and Sumaia Saad Eddin}

\maketitle
{\def\thefootnote{}
\footnote{{\it Mathematics Subject Classification (2000)}. 11N37, 11Y60}}

\begin{abstract}
\noindent 
In RSA cryptography numbers of the form $pq$, with $p$ and $q$ two distinct proportional primes play an important role. For a fixed real number $r>1$ we formalize this by saying that an integer
$pq$ is an RSA-integer if $p$ and $q$ are primes satisfying $p<q\le rp$.
Recently Dummit, Granville and Kisilevsky  showed that substantially more than a quarter of the odd integers
of the form $pq$ up to $x$, with $p, q$
both prime, satisfy $p\equiv q\equiv 3\pmod{4}$. 
In this paper we investigate this phenomenon for RSA-integers. 
We establish an analogue of a strong form of the prime number theorem with the
logarithmic integral replaced by a variant.
{}From this we derive
an asymptotic formula for the number of RSA-integers $\le x$ which is much more precise
than an earlier one derived by Decker and Moree in 2008.
\end{abstract}

\section{Introduction}
Let $\omega(n)$ and $\Omega(n)$ denote the number of distinct, respectively total number 
of prime factors of $n$. Put
$$\pi(x,k)=\sum_{n\le x\atop \omega(n)=k}1{\rm \quad and\quad}N(x,k)=\sum_{n\le x\atop \Omega(n)=k}1.$$
The following asymptotic formula is due to Landau
\cite[p. 211]{Landau}:
\begin{equation}
\label{gauss}
\pi(x,k)\sim N(x,k)\sim \frac{x}{\log x}\cdot \frac{(\log \log x)^{k-1}}{(k-1)!}.
\end{equation}
For a nice survey on $\pi(x,k)$ and $N(x,k)$ up to 1987 see 
Hildebrand \cite{H}. A recent contribution to the study of $\pi(x,k)$
is the discovery of {\bf bias}. Define
$$r(x):=\#\{pq\le x:p\equiv q\equiv 3({\rm mod~}4)\}/\frac{1}{4}\#\{pq\le x\}.$$
(Here and in the
sequel the notation $p$ and $q$ is exclusively used to indicate
primes.) Numerically it seems that consistently $r(x)>1$.
We have, e.g., $r(10^6)\approx 1.183$ and 
$r(10^7)\approx 1.162$.
Dummit et al.~\cite{D.G.K} showed that
\begin{equation}
\label{dumm}
r(x)=1+\frac{(\beta+o(1))}{\log \log x},
\end{equation}
with $\beta \approx 0.334$. This turns out to be in pretty
good agreement with the observed values. 
Since
$\beta/\log \log x $ tends to zero so slowly, \eqref{dumm} 
shows that
substantially more than a quarter of the integers $pq\le x$ satisfy 
$p\equiv 3({\rm  mod~}4)$ and $q\equiv 3({\rm  mod~}4)$.
\subsection{RSA-integers}
In the RSA cryptosystem, see \cite[Chapter 3]{Gathen}, integers 
of the form $n=p\cdot q$ are the main actors. The security
of this system is based on  the current difficulty of factoring 
such integers (sometimes called quasiprimes) in a reasonable time.
As soon as a working quantum computer is developed, it will be
the end of the RSA cryptosystem \cite{Shor}.
The RSA cryptosystem is known to be more easily breakable under
certain special restrictions on $p$ and $q$. E.g., if 
$|p-q|$ is small or if one of $p$ and $q$ is much smaller than 
the other (``unbalanced RSA''). In RSA practice $p$ and $q$
are taken to be proportional, i.e. $p<q<rp$ for some $r>1$.
This does not exclude $q-p$ from being small, but for our
counting purposes this suffices.
\subsection{Bias of RSA-integers}
We study two problems in this paper. One is to 
determine to what extent RSA-integers 
are biased.
If they are, we would glean a very small amount of information about 
their prime factorisation (provided they are generated randomly), and so
the question is somewhat relevant. The other problem is 
to find a precise asymptotic for the
counting function of RSA-integers
$$C_r(x):=\# \left\{pq \leq x: \ p< q \leq rp\right\},$$
where $r>1$ is an arbitrary real fixed number.\\
\indent Note that $C_r(x)$ is
the RSA-analogue of $\pi(x,2)$. Theorem \ref{DM} gives the asymptotic 
behaviour of $C_r(x)$. Comparison with
\eqref{gauss} shows that there are much fewer RSA-integers 
than integers having
two (distinct) prime factors.
\begin{Thm}[Decker and Moree \cite{Decker-Moree}]
\label{DM}
Let $r>1$ be a real number.
As $x$ tends to infinity we have
\begin{equation*}
\label{oud}
C_r(x)=\frac{2x\log r}{\log^2 x}+\Ocal\left(\frac{rx\log (er)}{\log^3 x}\right).
\end{equation*}
\end{Thm}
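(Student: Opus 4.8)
The plan is to fix the smaller prime $p$ and count the admissible larger primes $q$. For a given $p$, the constraints $p<q\le rp$ and $pq\le x$ confine $q$ to the interval $(p,\min(rp,x/p)]$, whence
$$C_r(x)=\sum_{p<\sqrt{x}}\Bigl(\pi\bigl(\min(rp,x/p)\bigr)-\pi(p)\Bigr).$$
The two candidate upper limits agree at $p=\sqrt{x/r}$, which dictates splitting the sum there: for $p\le\sqrt{x/r}$ one has $rp\le x/p$ and the inner count is $\pi(rp)-\pi(p)$, whereas for $\sqrt{x/r}<p<\sqrt{x}$ it equals $\pi(x/p)-\pi(p)$. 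Write $C_r(x)=S_1+S_2$ for the two resulting pieces.

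For $S_1$ I would substitute the prime number theorem in the shape $\pi(y)=\mathrm{li}(y)+E(y)$ and evaluate $\sum_{p\le\sqrt{x/r}}\bigl(\pi(rp)-\pi(p)\bigr)$ by partial summation against $\pi(t)$. Discarding $E$, the main term becomes the double integral $\int_2^{\sqrt{x/r}}\bigl(\int_s^{rs}\frac{dt}{\log t}\bigr)\frac{ds}{\log s}$; since $s/\log^2 s$ is increasing this is governed by $s$ near $\sqrt{x/r}$, where $\log s\sim\tfrac12\log x$, and it evaluates to $\frac{2x}{\log^2 x}\bigl(1-\tfrac1r\bigr)$. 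For $S_2$ the same partial summation replaces the sum by $\frac{2}{\log x}\int_{\sqrt{x/r}}^{\sqrt{x}}\bigl(\frac{x}{t}-t\bigr)\frac{dt}{\log t}$; inserting $\log t\sim\tfrac12\log x$ and using the elementary antiderivative $x\log t-t^2/2$ gives $\frac{2x}{\log^2 x}\bigl(\log r-1+\tfrac1r\bigr)$. Adding the two contributions, the terms $\pm(1-\tfrac1r)$ cancel and one is left precisely with the main term $\frac{2x\log r}{\log^2 x}$.

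The delicate part, and the real obstacle, is to show that every error discarded above is uniformly $\Ocal\bigl(rx\log(er)/\log^3 x\bigr)$. There are three sources to control: the propagated prime number theorem error $E$, for which I would invoke a version with a classical error term, e.g. $E(y)\ll y\exp(-c\sqrt{\log y})$, so that its contribution is negligible against $x/\log^3 x$; the next-order term $y/\log^2 y$ of $\mathrm{li}(y)$, which produces genuine contributions of exact size $x\log r/\log^3 x$ that must be bounded rather than cancelled; and the replacement of $\int_s^{rs}\frac{dt}{\log t}$ by $(r-1)s/\log s$, whose relative error $\Ocal\bigl(\log(er)/\log s\bigr)$ accounts for the factor $\log(er)$. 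The overall factor $r$ in the stated bound is the price of crude uniform estimation, the interval lengths $(r-1)s$ and the largest arguments of $\pi$ that occur (namely $rp$ and $x/p$, reaching $\sqrt{xr}$) both growing with $r$. Assembling all of these bounds uniformly in $r$, with every implied constant independent of $r$ through each partial summation, is where the bulk of the careful work lies; the identification of the main term is by comparison routine.
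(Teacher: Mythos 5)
Your decomposition of $C_r(x)$ into the two sums split at $p=\sqrt{x/r}$ is exactly the identity the paper records as \eqref{eq8} (the Decker--Moree starting point), and your evaluation of the two pieces---with the $\pm(1-\tfrac1r)$ contributions cancelling to leave $2x\log r/\log^2 x$---together with your use of the prime number theorem (the paper notes that the error $\Ocal(x\log^{-3}x)$ already suffices) follows the cited proof in all essentials. The error sources you identify are the right ones and each sits within the stated bound $\Ocal\left(rx\log(er)/\log^3 x\right)$, so this is essentially the same proof.
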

This result was generalized by Hashimoto \cite{hash} who determined the asymptotic 
behaviour
of $\# \left\{pq \leq x: \ p< q \leq f(p)\right\}$ for a large class of functions
$f$ satisfying $f(x)>x$. 
Another generalization was obtain by Justus \cite{Justus} who obtained an asymptotic
for $\# \left\{pq \leq x: \ p< q \leq x^{\theta}p\right\}$, with $0<\theta<1$ fixed.
On the more cryptographic side, there is the dissertation by Loebenberger \cite{Loebenberger}.\\
\indent The main aim of this paper is to establish a very precise asymptotic formula for
$C_r(x)$ (Corollary \ref{crcor}). On our way towards establishing 
this, we show that RSA-integers are rather unbiased (Corollary \ref{smallbias}). As
a particular case we obtain that the RSA-integer analogue of $r(x)$ shows little
bias (Corollary \ref{modi4}).\\
\indent As usual by $\pi(x)$ we denote the number of primes $p\le x$.
We will use the prime number theorem in the form
\begin{equation}
\label{PNT}
\pi(x)={\rm Li}(x)+\Ocal\Big(xe^{-c\sqrt{\log x}}\Big),
\end{equation}
where
$${\rm Li}(x)=\int_2^x \frac{dt}{\log t}$$
denotes the logarithmic integral.\\
\indent In our main result, a variant, $F_r(x)$, of the logarithmic integral will play the
main role. It is easily seen to be a \textbf{concave} function for $x\ge 2r$.
\begin{Thm}
\label{Thm3}
Let $r>1$ be an arbitrary fixed real number.
Given two sets of primes $S_1$ and $S_2$, we 
put $$D_r(x):=\# \{pq \leq x: p< q \leq rp,~p\in S_1,~q\in S_2\}.$$
Suppose that for $j=1,2$ the counting functions associated to $S_j$ satisfy 
\begin{equation}
\label{Sassumption}
\pi_{S_j}(x):=\sum_{p\le x\atop p\in S_j}1=\frac{1}{\delta_j}{\rm Li}(x)+
\Ocal\left(xe^{-c\sqrt{\log x}}\right),
\end{equation}
where $\delta_j>0$ and $c>0$ is a positive constant.
For $x\ge 2r$ put
$$F_r(x)=\int_{2r}^x\frac{\log \log \sqrt{rt}-\log \log \sqrt{t/r}}{\log t}dt.$$
For $x\ge 2r$ and $x$ tending to infinity we have
$$\delta_1\delta_2D_r(x)=F_r(x)+
\Ocal\left( rxe^{-c(\epsilon)\sqrt{\log x}}\right),$$
where $c(\epsilon)=(1-\epsilon)c/\sqrt{2}$ and $0<\epsilon<1$ is arbitrary.
\end{Thm}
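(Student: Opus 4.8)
The plan is to fix the smaller prime $p\in S_1$ and count the admissible $q\in S_2$ for each such $p$. Since we require $p<q\le rp$ together with $pq\le x$, the prime $q$ ranges over the interval $(p,\min(rp,x/p)]$, which is non-empty exactly when $p<\sqrt{x}$. Hence
$$D_r(x)=\sum_{p\in S_1,\ p<\sqrt{x}}\Big(\pi_{S_2}\big(\min(rp,x/p)\big)-\pi_{S_2}(p)\Big).$$
The cut-off $\min(rp,x/p)$ equals $rp$ for $p\le\sqrt{x/r}$ and $x/p$ for $\sqrt{x/r}<p<\sqrt{x}$, so I would split the $p$-sum at $\sqrt{x/r}$ accordingly. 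No product is counted twice, because $p<q$ is strict, so the constraint is harmless even when $S_1\cap S_2\neq\emptyset$.

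Next I would insert the hypothesis \eqref{Sassumption} for $S_2$. Writing $R_2(y)=\pi_{S_2}(y)-\delta_2^{-1}{\rm Li}(y)=\Ocal(ye^{-c\sqrt{\log y}})$, the inner differences become $\delta_2^{-1}\big({\rm Li}(\min(rp,x/p))-{\rm Li}(p)\big)$ plus an error. Because the $p$-sum is dominated by $p$ of size about $\sqrt{x}$, where $\sqrt{\log p}\approx\sqrt{\log x}/\sqrt2$, a dyadic estimate of $\sum_{p\le y}p\,e^{-c\sqrt{\log p}}\ll y^2e^{-c\sqrt{\log y}}$ shows that the total $\pi_{S_2}$-error is $\Ocal(rxe^{-(c/\sqrt2)\sqrt{\log x}})$. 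For the surviving main term $\delta_2^{-1}\sum_{p\in S_1}h(p)$ with $h(t)={\rm Li}(\min(rt,x/t))-{\rm Li}(t)$, I would apply Abel summation against $\pi_{S_1}$ and again use \eqref{Sassumption}. The boundary term at the top vanishes since $h(\sqrt{x})=0$, and the bound $|h'(t)|\ll r/\log t$, valid on both ranges, together with the same dyadic estimate controls $\int R_1(t)h'(t)\,dt$ by $\Ocal(rxe^{-(c/\sqrt2)\sqrt{\log x}})$. What remains is the clean main term, which after multiplying by $\delta_1\delta_2$ is
$$G_r(x):=\int_2^{\sqrt{x}}\frac{h(t)}{\log t}\,dt=\iint_{\substack{t<s\le rt\\ st\le x,\ t\ge 2}}\frac{ds\,dt}{\log s\,\log t}.$$

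The heart of the argument is then the identity $G_r(x)=F_r(x)+\Ocal(r\log r)$, which I would obtain by the change of variables $u=st$, $v=s/t$, so that $s=\sqrt{uv}$, $t=\sqrt{u/v}$, with Jacobian $ds\,dt=(2v)^{-1}\,du\,dv$ and $\log s\,\log t=\tfrac14\log(uv)\log(u/v)$. The region becomes $1<v\le r$, $u\le x$, with a $v$-dependent lower limit on $u$ coming from $t\ge2$. Integrating out $v$ via the substitution $w=\log v$ and partial fractions gives
$$\int_0^{\log r}\frac{2\,dw}{(\log u)^2-w^2}=\frac{1}{\log u}\log\frac{\log u+\log r}{\log u-\log r}=\frac{\log\log\sqrt{ru}-\log\log\sqrt{u/r}}{\log u},$$
which is precisely the integrand of $F_r$. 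The discrepancy between the two lower limits of integration, together with the region of small $u$ where the $v$-range is truncated, contributes only a quantity bounded in terms of $r$, hence is $\Ocal(rxe^{-c(\epsilon)\sqrt{\log x}})$ for $x$ large.

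Finally I would collect the estimates. Every error term is $\Ocal(rxe^{-(c/\sqrt2)\sqrt{\log x}})$ up to factors polynomial in $\log x$ and in $r$; these are absorbed by replacing $c/\sqrt2$ with $c(\epsilon)=(1-\epsilon)c/\sqrt2$, which yields the stated formula. The main obstacle is not the change of variables, which is exact, but the uniform control of the error terms: one must verify that the $e^{-c\sqrt{\log y}}$ losses, evaluated at the typical scale $y\asymp\sqrt{x}$ rather than $y\asymp x$, produce exactly the constant $c/\sqrt2$, and that the dependence on $r$ remains polynomial so that it can be swallowed by the $(1-\epsilon)$ factor.
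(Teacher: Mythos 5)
Your proposal is correct, and while its arithmetic half follows the paper's route, the identification of the main term with $F_r(x)$ is done by a genuinely different device. Like the paper, you begin with the decomposition obtained by fixing $p\in S_1$ and splitting at $\sqrt{x/r}$ (this is exactly \eqref{analogue}), replace $\pi_{S_2}$ by $\delta_2^{-1}{\rm Li}$, then sum by parts against $\pi_{S_1}$ and replace it by $\delta_1^{-1}{\rm Li}$; your dyadic bound $\sum_{p\le y}p\,e^{-c\sqrt{\log p}}\ll y^2e^{-c\sqrt{\log y}}$ is a slightly sharper form of Lemma \ref{lem10}, deferring the $\epsilon$-loss to the end, which is harmless since only $c(\epsilon)=(1-\epsilon)c/\sqrt2$ is claimed, and your Abel summation with the vanishing boundary term $h(\sqrt{x})=0$ substitutes for Lemmas \ref{sumoneandtwo} and \ref{sumthree} (which the paper itself remarks can be proved by partial integration). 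The divergence comes afterwards: the paper assembles the main term into the closed form $G_r(x)$ of \eqref{grrr}, computes $G_r'(x)$ in Lemma \ref{grderivative} via the substitution $t=x/v$, recognizes $G_r'=F_r'$, and concludes $G_r(x)=F_r(x)+\Ocal(r)$; you instead keep the main term as the double integral of $(\log s\,\log t)^{-1}$ over $\{t<s\le rt,\ st\le x,\ t\ge 2\}$ and apply the symmetric change of variables $u=st$, $v=s/t$, integrating out $v$ exactly to produce the $F_r$-integrand, with the boundary mismatch (lower limit $4r$ versus $2r$, and the truncation $v\le u/4$ for $4<u<4r$) contributing $\Ocal(r\log r)$ — marginally weaker than the paper's $\Ocal(r)$ but equally negligible against $rxe^{-c(\epsilon)\sqrt{\log x}}$. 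Analytically your inner $v$-integral $\int_0^{\log r}2\,dw/((\log u)^2-w^2)$ is the paper's Lemma \ref{grderivative} computation in disguise (their $t=x/v$ symmetrization corresponds to your partial-fraction split), but your organization buys conceptual transparency — it shows at a glance why the kernel $(\log\log\sqrt{ru}-\log\log\sqrt{u/r})/\log u$ arises — at the cost of somewhat breezier boundary bookkeeping; the details you leave implicit do check out: $|h'(t)|\ll r/\log t$ holds on both ranges because $x/t^2\le r$ and $\log(x/t)\ge\log t$ there, and the truncated small-$u$ region contributes only $\Ocal(r)$ since there the $v$-integral is $\ll(\log\log u)/\log u$.
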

\begin{cor}
\label{eerstecor}
We have
$C_r(x)=F_r(x)+\Ocal\left( rxe^{-c(\epsilon)\sqrt{\log x}}\right)$.
\end{cor}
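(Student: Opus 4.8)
The plan is to obtain this corollary as an immediate specialization of Theorem \ref{Thm3}. I would take both sets $S_1$ and $S_2$ to be the entire set of primes. With this choice the count $D_r(x)$ defined in Theorem \ref{Thm3} reduces to $\#\{pq\le x: p<q\le rp\}$, since the membership conditions $p\in S_1$ and $q\in S_2$ become vacuous; by the convention that $p$ and $q$ denote primes, this is exactly $C_r(x)$ as defined in the introduction.

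The only point to check is the hypothesis \eqref{Sassumption}. For $S_j$ equal to all primes the associated counting function is $\pi_{S_j}(x)=\pi(x)$, and the prime number theorem in the form \eqref{PNT} reads $\pi(x)={\rm Li}(x)+\Ocal(xe^{-c\sqrt{\log x}})$. This is precisely \eqref{Sassumption} with $\delta_1=\delta_2=1$ and with the same constant $c$, so the hypotheses of Theorem \ref{Thm3} are satisfied.

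Feeding these values into the conclusion of Theorem \ref{Thm3} gives $\delta_1\delta_2 D_r(x)=D_r(x)=F_r(x)+\Ocal(rxe^{-c(\epsilon)\sqrt{\log x}})$, and since $D_r(x)=C_r(x)$ this is exactly the claimed formula, with $c(\epsilon)=(1-\epsilon)c/\sqrt{2}$ inherited directly from the theorem. There is essentially no obstacle to surmount: all the analytic content—setting up the double sum over $p$ and $q$, applying partial summation against the prime-counting estimates, and identifying the resulting main term with the integral $F_r(x)$—is carried out in the proof of Theorem \ref{Thm3}, and this corollary is merely the case $\delta_1=\delta_2=1$.
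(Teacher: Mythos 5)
Your proposal is correct and matches the paper's proof exactly: both take $S_1=S_2$ to be the set of all primes, verify hypothesis \eqref{Sassumption} with $\delta_1=\delta_2=1$ via the prime number theorem \eqref{PNT}, and read off the conclusion from Theorem \ref{Thm3}. Your write-up is simply a more detailed version of the paper's two-line argument.
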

\begin{proof}
For $S_1$ and $S_2$ we take the set of all primes. 
It follows by \eqref{PNT} that 
condition \eqref{Sassumption} is satisfied with $\delta_1=\delta_2=1$.
\end{proof}
\begin{cor}
\label{smallbias}
We have
$\delta_1\delta_2D_r(x)=C_r(x)+\Ocal\left(rxe^{-c(\epsilon)\sqrt{\log x}}\right)$ and
$$R_r(x) :=\frac{\delta_1\delta_2D_r(x)}{C_r(x)}=1+\Ocal_r\left((\log^2 x)\,\, e^{-c(\epsilon)\sqrt{\log x}}\right).$$
\end{cor}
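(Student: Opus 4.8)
The plan is to deduce both statements directly from Theorem \ref{Thm3}, Corollary \ref{eerstecor}, and Theorem \ref{DM}, since all the real analytic work has already been carried out. For the first (additive) statement, I would simply subtract the two asymptotic identities already in hand: Theorem \ref{Thm3} gives $\delta_1\delta_2 D_r(x)=F_r(x)+\Ocal(rxe^{-c(\epsilon)\sqrt{\log x}})$, while Corollary \ref{eerstecor} gives $C_r(x)=F_r(x)+\Ocal(rxe^{-c(\epsilon)\sqrt{\log x}})$. The common main term $F_r(x)$ cancels, and the two error terms have the same shape, so their difference is again $\Ocal(rxe^{-c(\epsilon)\sqrt{\log x}})$. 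This yields $\delta_1\delta_2 D_r(x)=C_r(x)+\Ocal(rxe^{-c(\epsilon)\sqrt{\log x}})$ immediately.

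For the ratio statement, I would divide the identity just obtained by $C_r(x)$, giving
$$R_r(x)=1+\frac{\Ocal(rxe^{-c(\epsilon)\sqrt{\log x}})}{C_r(x)}.$$
To convert this absolute error into the claimed relative form, I need a lower bound for $C_r(x)$, and here Theorem \ref{DM} is precisely what is required: it yields $C_r(x)\sim \frac{2x\log r}{\log^2 x}$, so that $C_r(x)\gg_r x/\log^2 x$ for all sufficiently large $x$. Substituting this lower bound, the error term becomes
$$\frac{rxe^{-c(\epsilon)\sqrt{\log x}}}{C_r(x)}\ll_r \frac{r\log^2 x}{\log r}\,e^{-c(\epsilon)\sqrt{\log x}}=\Ocal_r\!\left((\log^2 x)\,e^{-c(\epsilon)\sqrt{\log x}}\right),$$
where the $r$-dependent constant $r/\log r$ is absorbed into the implied constant. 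This is exactly the asserted estimate.

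The argument involves no genuine obstacle; everything reduces to results proved earlier in the paper. The only point requiring a moment of care is the appearance of the subscript $r$ on the big-$\Ocal$ in the ratio bound: it arises precisely because passing from an absolute error $rxe^{-c(\epsilon)\sqrt{\log x}}$ to a relative one forces a division by $C_r(x)$, whose leading constant $2\log r$ depends on $r$. The same computation also explains why the exponential savings $e^{-c(\epsilon)\sqrt{\log x}}$ persist in the final bound while being inflated only by the harmless polynomial factor $\log^2 x$, which stems from the $\log^{-2}x$ decay of $C_r(x)$.
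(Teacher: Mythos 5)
Your proposal is correct and follows exactly the route the paper takes: the paper's own proof is the one-line remark ``Follows from Theorem \ref{Thm3}, Corollary \ref{eerstecor} and Theorem \ref{DM}'', and your argument simply fills in the intended details (cancel the common main term $F_r(x)$, then divide by $C_r(x)$ and use the lower bound $C_r(x)\gg_r x/\log^2 x$ from Theorem \ref{DM}). Nothing to object to; your remark about where the $r$-dependence of the implied constant enters is accurate.
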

\begin{proof}
Follows from Theorem \ref{Thm3}, Corollary \ref{eerstecor} and Theorem \ref{DM}.
\end{proof}
\begin{cor}
\label{modi4} Let $a_1,d_1,a_2,d_2$ be natural numbers with $(a_1,d_1)=(a_2,d_2)=1$.
We have
$$\frac{\#\{pq\le x:p\equiv a_1({\rm mod~}d_1),~q\equiv a_2({\rm mod~}d_2),~p<q\le rp\}}
{\#\{pq\le x:p<q\le rp\}/(\varphi(d_1)\varphi(d_2))}
=1+\Ocal_r\left((\log^2 x)\, \, e^{-c(\epsilon)\sqrt{\log x}}\right).$$
\end{cor}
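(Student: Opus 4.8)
The plan is to recognize this corollary as an immediate specialization of Corollary \ref{smallbias}. First I would take
$$S_1=\{p : p\equiv a_1\!\!\pmod{d_1}\}\quad\text{and}\quad S_2=\{q : q\equiv a_2\!\!\pmod{d_2}\},$$
so that the quantity $D_r(x)$ of Theorem \ref{Thm3} becomes exactly the numerator
$\#\{pq\le x: p\equiv a_1\ (\mathrm{mod}\ d_1),\ q\equiv a_2\ (\mathrm{mod}\ d_2),\ p<q\le rp\}$ appearing in the statement, while $C_r(x)=\#\{pq\le x:p<q\le rp\}$ is precisely the full RSA-integer count occurring in the denominator.

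The key step is to verify that the counting-function hypothesis \eqref{Sassumption} holds for these $S_j$ with $\delta_j=\varphi(d_j)$. This is exactly the prime number theorem for arithmetic progressions in Siegel--Walfisz form for a \emph{fixed} modulus: since $(a_j,d_j)=1$, one has
$$\pi_{S_j}(x)=\pi(x;d_j,a_j)=\frac{1}{\varphi(d_j)}{\rm Li}(x)+\Ocal\Big(xe^{-c_j\sqrt{\log x}}\Big)$$
for some constant $c_j>0$ depending on $d_j$. Because $d_1$ and $d_2$ are fixed, I would then take $c=\min(c_1,c_2)>0$, a single positive constant for which \eqref{Sassumption} is satisfied simultaneously for $j=1$ and $j=2$, with $\delta_1=\varphi(d_1)$ and $\delta_2=\varphi(d_2)$.

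Finally I would simply invoke Corollary \ref{smallbias} with these choices. Since $\delta_1\delta_2=\varphi(d_1)\varphi(d_2)$, the ratio $R_r(x)=\delta_1\delta_2D_r(x)/C_r(x)$ of that corollary coincides term by term with the ratio displayed in the present statement, and one reads off
$$R_r(x)=1+\Ocal_r\Big((\log^2 x)\,e^{-c(\epsilon)\sqrt{\log x}}\Big),$$
which is the claim. There is no genuine analytic obstacle here beyond the bookkeeping of implied constants: the only point requiring care is that the constant hidden in the final $\Ocal_r$ now also absorbs the dependence on the fixed moduli $d_1,d_2$, inherited through the $c_j$ from the Siegel--Walfisz estimate, so that the error term remains uniform in $x$ for each fixed choice of $r$ and of the congruence data.
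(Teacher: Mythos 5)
Your proposal is correct and matches the paper's own proof essentially verbatim: the paper likewise takes $S_j$ to be the primes $\equiv a_j\ ({\rm mod~}d_j)$, verifies the hypothesis \eqref{Sassumption} with $\delta_j=\varphi(d_j)$ via the prime number theorem for arithmetic progressions in the form \eqref{PNTAP}, and then applies Corollary \ref{smallbias}. Your extra remark about taking $c=\min(c_1,c_2)$ and absorbing the dependence on the fixed moduli into the implied constant is a sound piece of bookkeeping that the paper leaves implicit (cf.\ its remark that error terms may depend on the sets $S_j$).
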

\begin{proof}
For $S_j$ we take in Corollary \ref{smallbias} the set of all primes $\equiv a_j({\rm mod~}d_j)$. 
The prime number theorem for arithmetic progressions in the
form
\begin{equation}
\label{PNTAP}
\pi_{S_j}(x)=\frac{{\rm Li}(x)}{\varphi(d_j)} +\Ocal\left(x e^{-c\sqrt{\log x}}\right)
\end{equation}
then shows that condition \eqref{Sassumption} is satisfied with 
$\delta_j=\varphi(d_j)$ (as usual $\varphi(d)$ denotes Euler's totient function).
\end{proof}
On comparing \eqref{dumm} with Corollary \ref{smallbias} 
(or with Corollary \ref{modi4} for that matter) we see that for RSA-integers there is far less bias than for integers $n\le x$ having two
distinct prime factors.\\
\indent 
The implicit error terms of results involving the sets $S_j$ might
depend on them. For notational convenience this possible dependence is 
not explicitly indicated.

\subsection{Asymptotic formulas for $F_r(x)$ and $D_r(x)$}
By splitting the integration range in say $2$ to $\sqrt{x}$ and $\sqrt{x}$ to $x$, one sees
that 
\begin{equation}
\label{zoflauw}
\int_2^x \frac{dt}{\log^k t}=\Ocal_k
\left(\frac{x}{\log^{k}x}\right).
\end{equation}
Using this and partial integration we infer that for every
$n\ge 2$ we have 
\begin{equation}
\label{liexp}
{\rm Li}(x)=\sum_{k=1}^{n-1} (k-1)!\frac{x}{\log ^k x}+
\Ocal_n\Big(\frac{x}{\log^{n} x}\Big).
\end{equation}
Theorem \ref{expansie} provides the analogue of the 
asymptotic formula \eqref{liexp} for $F_r(x)$.
\begin{Thm}
\label{expansie}
Let $r>1$ be an arbitrary fixed real number and $n\ge 2$ an integer.
Then
\begin{equation*}
F_r(x)=\sum_{k=1}^{n-1}a_k(r)\frac{x}{\log^{k+1} x}+
\Ocal_{n}\left(\frac{x\log^{2\lfloor n/2\rfloor+1} (2r)\log r}{\log^{n+1} x}\right),
\end{equation*}
where
\begin{equation*}
a_k(r)=\sum_{j=1}^{[\frac{k+1}{2}]}\frac{k!}{(2j-1)!}\frac{2\log^{2j-1}r}{2j-1}, 
\end{equation*}
with $[x]$ the integral part of $x$.
\begin{table}[ht]
\centering
  \begin{tabular}{ |l || l | }
    \hline
    $k$ & $a_k(r)$  \\ \hline \hline
    $1$ & $2\rho$\\ \hline
    $2$ & $4\rho $ \\ \hline
    $3$ & $12\rho+2\rho^3/3$ \\ \hline
    $4$ & $48\rho+8\rho^3/3$ \\ \hline
    $5$ & $240\rho+40\rho^3/3+2\rho^5/5$  \\ \hline
    $6$ &   $1440 \rho + 80 \rho^3  + 12\rho^5/5$ \\ \hline
    $7$ &   $10080\rho + 560\rho^3   + 84\rho^5/5  + 2\rho^7/7 $\\ \hline
    $8$ &   $80640 \rho+ 4480 \rho^3  + 672\rho^5/5  + 16\rho^7/7$ \\ \hline
    $9$ &  $725760 \rho + 40320 \rho^3 + 6048\rho^5/5 + 144\rho^7/7  + 2\rho^9/9$ \\ \hline
    $10$ &  $7257600 \rho + 403200 \rho^3 + 12096 \rho^5  + 1440\rho^7/7  + 20\rho^9/9$ \\ \hline
  \end{tabular}
\label{Tab1}
\caption{The polynomial $a_k(r)$ for $k \in \{1, 2, \ldots, 10\}$ with $\rho=\log r$.}
\end{table}
\end{Thm}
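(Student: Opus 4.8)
The plan is to reduce $F_r(x)$ to a combination of the standard integrals $\int_{2r}^x dt/(\log t)^m$ and then expand each of these as in \eqref{liexp}. Write $\rho=\log r$ and use $\log\sqrt{rt}=\tfrac12(\log t+\rho)$ and $\log\sqrt{t/r}=\tfrac12(\log t-\rho)$, so that the numerator of the integrand collapses to $\log\frac{\log t+\rho}{\log t-\rho}$ (the two constants $\log 2$ cancel). For $t\ge 2r$ we have $\log t\ge\log(2r)=\rho+\log 2>\rho$, hence $\rho/\log t\in(0,1)$ and the identity $\log\frac{1+u}{1-u}=2\sum_{j\ge 1}u^{2j-1}/(2j-1)$ with $u=\rho/\log t$ applies. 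Dividing by $\log t$, the integrand equals $2\sum_{j\ge 1}\rho^{2j-1}/\big((2j-1)(\log t)^{2j}\big)$, a series of nonnegative terms, so by monotone convergence I may integrate term by term to obtain
$$F_r(x)=2\sum_{j\ge 1}\frac{\rho^{2j-1}}{2j-1}\int_{2r}^x\frac{dt}{(\log t)^{2j}}.$$

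Next I would expand each inner integral. From $\frac{d}{dt}\frac{t}{(\log t)^m}=\frac{1}{(\log t)^m}-\frac{m}{(\log t)^{m+1}}$ one gets the recursion $\int_{2r}^x\frac{dt}{(\log t)^m}=\frac{x}{(\log x)^m}-\frac{2r}{(\log 2r)^m}+m\int_{2r}^x\frac{dt}{(\log t)^{m+1}}$; iterating and bounding the final integral by \eqref{zoflauw} yields, for $m=2j$, an expansion in which the coefficient of $x/(\log x)^{2j+i}$ is $(2j+i-1)!/(2j-1)!$, plus boundary terms at $2r$ that are bounded independently of $x$. Substituting this into the displayed sum, truncating every inner expansion at the power $(\log x)^n$, and collecting the coefficient of $x/(\log x)^{k+1}$ (i.e. setting $k+1=2j+i$, where $i\ge 0$ forces $j\le[\tfrac{k+1}{2}]$) reproduces exactly
$$a_k(r)=\sum_{j=1}^{[\frac{k+1}{2}]}\frac{k!}{(2j-1)!}\frac{2\rho^{2j-1}}{2j-1},\qquad k=1,\dots,n-1.$$
Since $[\tfrac{k+1}{2}]\le\lfloor n/2\rfloor$ for all $k\le n-1$, the natural cut-off for the outer sum is $J=\lfloor n/2\rfloor$.

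The error estimate is where the real work lies, and I expect it to be the main obstacle. There are two contributions. For the head, each truncated inner expansion with $j\le J$ leaves $\tfrac{n!}{(2j-1)!}\tfrac{2\rho^{2j-1}}{2j-1}\int_{2r}^x dt/(\log t)^{n+1}=\Ocal_n\big(\rho^{2j-1}x/\log^{n+1}x\big)$, and summing over $j\le J$ gives $\Ocal_n\big(\rho^{2J-1}x/\log^{n+1}x\big)$, the accumulated boundary terms being $\Ocal_n(r)$ and hence negligible as $x\to\infty$. For the tail $j>J$, integrating the corresponding part of the series back gives
$$2\sum_{j>J}\frac{\rho^{2j-1}}{2j-1}\int_{2r}^x\frac{dt}{(\log t)^{2j}}\le\int_{2r}^x\frac{2\rho^{2J+1}}{(\log t)^{2J+2}}\cdot\frac{dt}{1-(\rho/\log t)^2}.$$
The delicate point is the factor $1/\big(1-(\rho/\log t)^2\big)$, which is largest at $t=2r$, where $1-(\rho/\log 2r)^2=\log 2\,(2\rho+\log 2)/(\rho+\log 2)^2$ is of order $1/\log r$ for large $r$; thus the factor is $\Ocal(1+\log r)$ uniformly on $[2r,x]$. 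Pulling this bound out and estimating the remaining integral by \eqref{zoflauw} gives a tail of size $\Ocal\big((1+\log r)\rho^{2J+1}x/\log^{2J+2}x\big)$. With $J=\lfloor n/2\rfloor$ one has $2J+2\ge n+1$, and the elementary inequalities $\rho^{2J+1}\le\tfrac{1}{\log 2}\log^{2J+1}(2r)\log r$ and $\rho^{2J+2}\le\log^{2J+1}(2r)\log r$ (using $\rho=\log r\le\log(2r)$) convert the powers of $\rho$ into the stated form, so the tail is $\Ocal_n\big(x\log^{2\lfloor n/2\rfloor+1}(2r)\log r/\log^{n+1}x\big)$, dominating the head remainder and yielding precisely the claimed error.

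In summary, everything except the outer-sum tail is a routine, Li-type integration-by-parts computation; the one genuinely delicate step is controlling the factor $1/\big(1-(\rho/\log t)^2\big)$ near the lower endpoint $t=2r$ and recasting the resulting powers of $\rho=\log r$ as $\log^{2\lfloor n/2\rfloor+1}(2r)\log r$, which is exactly what produces the shape of the error term.
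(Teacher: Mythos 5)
Your proposal is correct and follows essentially the same route as the paper's: the identical Taylor expansion of $\log\frac{1+u}{1-u}$ with $u=\rho/\log t$, termwise integration, the same partial-integration expansion of $\int_{2r}^x dt/\log^{2j}t$ (packaged in the paper as Lemma \ref{stoptnooit}), and the very same tail estimate via $2u^{2J+1}/(1-u^2)$ with the factor $1/\bigl(1-(\rho/\log(2r))^2\bigr)=\Ocal(\log(2r))$ evaluated at the endpoint $t=2r$ --- the paper merely truncates the series before integrating (its $E_m$) and handles the parity of $n$ by a separate case $n=2m$ using that $a_{2m}(r)$ is odd in $\rho$, where you instead truncate every inner expansion at $\log^{-n}x$ in one pass. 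One small repair: your intermediate head bound $\sum_{j\le J}\rho^{2j-1}=\Ocal_n(\rho^{2J-1})$ fails for $1<r<e$, where $\rho<1$ and the $j=1$ term dominates (exactly the ``$1+\log r=\Ocal(\log r)$ only for $r$ large'' pitfall the paper warns about), but since $\sum_{j\le J}\rho^{2j-1}=\Ocal_n\bigl(\rho\,\log^{2J-2}(2r)/\log^{2J-2}2\bigr)$ the head is still majorized by the stated error term and your conclusion stands.
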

Theorem \ref{expansie} when combined with Theorem \ref{Thm3} 
now yields Theorem \ref{Thm2}.

\begin{Thm}
\label{Thm2}
Let $S_1$ and $S_2$ be sets of primes satisfying the condition~\eqref{Sassumption}.
Let $r>1$ be an arbitrary fixed real number and $n\ge 2$ be an arbitrary integer. 
As $x$ tends to infinity, we have 
\begin{equation*}
\delta_1\delta_2D_r(x)=\sum_{k=1}^{n-1}a_k(r)\frac{x}{\log^{k+1} x}
+
\Ocal\left(rx e^{-c(\epsilon)\sqrt{\log x}}\right)+\Ocal_{n}\left(\frac{x\log^{2\lfloor n/2\rfloor+1} (2r)\log r}{\log^{n+1} x}\right),
\end{equation*}
where $c(\epsilon)$ and $a_k(r)$ are defined in Theorem \textrm{\ref{Thm3}}, respectively Theorem \textrm{\ref{expansie}}.
\end{Thm}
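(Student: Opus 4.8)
The plan is to obtain Theorem \ref{Thm2} by directly combining the two results already in hand, namely Theorem \ref{Thm3} and Theorem \ref{expansie}. The substitution is essentially the whole of the argument, so there is no genuine obstacle here beyond careful bookkeeping with the two error terms; the real work lies in the proofs of Theorems \ref{Thm3} and \ref{expansie}, which we are entitled to assume.

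First I would invoke Theorem \ref{Thm3}, which for $x\ge 2r$ gives
$$\delta_1\delta_2 D_r(x)=F_r(x)+\Ocal\left(rxe^{-c(\epsilon)\sqrt{\log x}}\right).$$
Since the conclusion is stated in the regime $x\to\infty$, the hypothesis $x\ge 2r$ is eventually satisfied and causes no difficulty; the sets $S_1,S_2$ enter only through condition \eqref{Sassumption}, which is exactly the standing hypothesis of Theorem \ref{Thm3}, so the passage is legitimate verbatim.

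Next I would replace $F_r(x)$ by its asymptotic expansion furnished by Theorem \ref{expansie} for the same integer $n\ge 2$, namely
$$F_r(x)=\sum_{k=1}^{n-1}a_k(r)\frac{x}{\log^{k+1}x}+\Ocal_n\left(\frac{x\log^{2\lfloor n/2\rfloor+1}(2r)\log r}{\log^{n+1}x}\right).$$
Substituting this expansion into the expression from Theorem \ref{Thm3} and recording the two error terms side by side yields the claimed formula at once. The one point worth noting is that the two error terms are of entirely different origin --- one stems from the prime-counting input via \eqref{Sassumption}, the other from the purely analytic estimate of the integral defining $F_r(x)$ --- so they are simply added together and no interaction or cancellation between them is needed. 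In this sense the entire substance of Theorem \ref{Thm2} is already contained in Theorems \ref{Thm3} and \ref{expansie}, and the present statement is their immediate formal consequence.
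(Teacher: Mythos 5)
Your proposal is correct and follows exactly the paper's own route: the authors state explicitly that Theorem \ref{expansie} combined with Theorem \ref{Thm3} yields Theorem \ref{Thm2}, with the two error terms of independent origin simply recorded side by side, just as you do. (The section headed ``Proof of Theorem \ref{Thm2}'' in the paper is in substance a proof of the expansion of $F_r(x)$, i.e.\ of Theorem \ref{expansie}, which you are entitled to take as given since it is stated as a standalone theorem.)
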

\begin{cor}
\label{crcor}
Let $r>1$
be an arbitrary fixed real number and $n\ge 2$ be an arbitrary integer.
As $x$ tends to infinity, we have 
\begin{equation*}
C_r(x)=\sum_{k=1}^{n-1}a_k(r)\frac{x}{\log^{k+1} x}
+
\Ocal\left(rx e^{-c(\epsilon)\sqrt{\log x}}\right)+\Ocal_{n}\left(\frac{x\log^{2\lfloor n/2\rfloor+1} (2r)\log r}{\log^{n+1} x}\right),
\end{equation*}
\end{cor}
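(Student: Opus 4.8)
The plan is to obtain this as the special case of Theorem \ref{Thm2} in which $S_1$ and $S_2$ are both taken to be the set of all primes, exactly mirroring how Corollary \ref{eerstecor} specializes Theorem \ref{Thm3}. First I would verify that this choice of $S_1,S_2$ satisfies the hypothesis \eqref{Sassumption}. For the full set of primes one has $\pi_{S_j}(x)=\pi(x)$, and the prime number theorem in the form \eqref{PNT} gives $\pi(x)={\rm Li}(x)+\Ocal(xe^{-c\sqrt{\log x}})$, which is precisely \eqref{Sassumption} with $\delta_1=\delta_2=1$. Second, I would observe that when $S_1=S_2$ is the set of all primes the constraints $p\in S_1$ and $q\in S_2$ are vacuous, so the counting function $D_r(x)$ reduces to $C_r(x)$ itself.

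With these two observations in hand, substituting $\delta_1=\delta_2=1$ into the conclusion of Theorem \ref{Thm2} replaces $\delta_1\delta_2 D_r(x)$ on the left-hand side by $C_r(x)$, while leaving the right-hand side — the main sum $\sum_{k=1}^{n-1}a_k(r)\,x/\log^{k+1}x$ together with the two error terms $\Ocal(rxe^{-c(\epsilon)\sqrt{\log x}})$ and $\Ocal_n(x\log^{2\lfloor n/2\rfloor+1}(2r)\log r/\log^{n+1}x)$ — entirely unchanged. This is exactly the asserted formula, so the corollary is immediate once Theorem \ref{Thm2} is available.

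Since all of the real content lives upstream, there is essentially no obstacle at this stage: the work has already been done in proving Theorem \ref{Thm2}, itself a combination of Theorem \ref{Thm3} (the analogue of the prime number theorem with $F_r$ in place of ${\rm Li}$) and the asymptotic expansion of $F_r(x)$ furnished by Theorem \ref{expansie}. An equally short alternative route, bypassing the statement of Theorem \ref{Thm2}, would be to combine Corollary \ref{eerstecor}, which gives $C_r(x)=F_r(x)+\Ocal(rxe^{-c(\epsilon)\sqrt{\log x}})$, directly with the expansion of $F_r(x)$ from Theorem \ref{expansie}; merging the two error terms then reproduces the claim. Either way, the only thing to check is the bookkeeping of the $\delta_j$ and the identity $D_r=C_r$ under this choice of prime sets.
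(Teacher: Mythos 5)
Your proposal is correct and matches the paper's (implicit) argument exactly: Corollary \ref{crcor} is obtained by specializing Theorem \ref{Thm2} to $S_1=S_2$ the set of all primes, where \eqref{PNT} verifies \eqref{Sassumption} with $\delta_1=\delta_2=1$ and $D_r(x)=C_r(x)$, precisely mirroring how Corollary \ref{eerstecor} follows from Theorem \ref{Thm3}. Your alternative route via Corollary \ref{eerstecor} combined with Theorem \ref{expansie} is also sound and amounts to the same computation.
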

\begin{cor}
Let $B>0$ be an arbitrary real number. Uniformly for $1<r\le \log^B x$ we have 
\begin{equation*}
C_r(x)=\sum_{k=1}^{n-1}a_k(r)\frac{x}{\log^{k+1} x}
+\Ocal_{n,B}\left(\frac{x}{\log^{n+1} x}(\log \log x)^{2\lfloor n/2\rfloor+2}\right),
\end{equation*}
\end{cor}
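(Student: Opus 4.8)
The plan is to deduce this uniform statement directly from Corollary \ref{crcor} (equivalently from Theorem \ref{Thm2} applied with $S_1=S_2$ the set of all primes) by substituting the constraint $r\le \log^B x$ into its two error terms, provided these are uniform in $r$. First I would fix the free parameter, taking $\epsilon=1/2$, so that $c(\epsilon)=c/(2\sqrt{2})$ is a fixed positive constant and the exponential error term of Corollary \ref{crcor} becomes $\Ocal(rx e^{-c(\epsilon)\sqrt{\log x}})$ with an absolute implied constant. Since $r\le \log^B x$, the hypothesis $x\ge 2r$ of Theorem \ref{Thm3} holds once $x$ is large enough, so Corollary \ref{crcor} is applicable throughout the stated range.

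The heart of the matter is then two elementary estimates. For the exponential error term, $r\le \log^B x$ gives
$$rx e^{-c(\epsilon)\sqrt{\log x}}\le x\log^B x\, e^{-c(\epsilon)\sqrt{\log x}}.$$
Writing $u=\sqrt{\log x}$ and noting that $u^{2(B+n+1)}e^{-c(\epsilon)u}\to 0$, one sees that $\log^{B+n+1}x\, e^{-c(\epsilon)\sqrt{\log x}}$ is bounded, whence this term is $\Ocal_{n,B}(x/\log^{n+1}x)$ and is therefore absorbed into the claimed error (as $(\log\log x)^{2\lfloor n/2\rfloor+2}\ge 1$ for large $x$). For the polynomial error term $\Ocal_n(x\log^{2\lfloor n/2\rfloor+1}(2r)\log r/\log^{n+1}x)$ of Corollary \ref{crcor}, I use that $r\le \log^B x$ forces $\log r\le B\log\log x$ and $\log(2r)=\Ocal_B(\log\log x)$, so that
$$\log^{2\lfloor n/2\rfloor+1}(2r)\,\log r=\Ocal_{n,B}\big((\log\log x)^{2\lfloor n/2\rfloor+2}\big),$$
which reproduces exactly the error term in the statement.

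The main obstacle, and really the only nonroutine point, is to be sure that the error terms of Corollary \ref{crcor} are genuinely uniform in $r$, that is, that their implied constants depend only on $n$ (and the now-fixed $\epsilon$) and not on $r$. This is where one must inspect the proofs of Theorem \ref{Thm3} and Theorem \ref{expansie}: in Theorem \ref{Thm3} the entire $r$-dependence is displayed explicitly in the factor $r$ multiplying $e^{-c(\epsilon)\sqrt{\log x}}$, the implied constant being absolute, while in Theorem \ref{expansie} all $r$-dependence is carried by the explicit factors $\log^{2\lfloor n/2\rfloor+1}(2r)$ and $\log r$, the remaining implied constant depending only on $n$. Granting this uniformity, combining the two bounds above with the main sum $\sum_{k=1}^{n-1}a_k(r)\,x/\log^{k+1}x$ coming from Corollary \ref{crcor} yields the assertion.
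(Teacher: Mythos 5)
Your proposal is correct and is essentially the paper's (implicit) argument: the paper states this corollary without proof precisely because it follows from Corollary \ref{crcor} by fixing $\epsilon$, noting the error terms there are uniform in $r>1$, and substituting $r\le\log^B x$, which gives $rxe^{-c(\epsilon)\sqrt{\log x}}=\Ocal_{n,B}(x/\log^{n+1}x)$ and $\log^{2\lfloor n/2\rfloor+1}(2r)\log r=\Ocal_{n,B}\bigl((\log\log x)^{2\lfloor n/2\rfloor+2}\bigr)$, exactly as you compute. Your emphasis on verifying that the implied constants in Theorems \ref{Thm3} and \ref{expansie} depend only on $n$ and not on $r$ is well placed, as this uniformity is exactly what the authors' careful tracking of $r$-dependence (e.g.\ via factors $\log(2r)$) was designed to deliver.
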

Note that Corollary \ref{crcor} with $n=2$ slightly improves on
Theorem \ref{DM}.
With more work it is possible to improve the error terms in our
results in the $r$ aspect. As this seems to be mathematically not very 
important, but requires considerable effort and is not beneficial for
the brevity and clarity of our presentation, we have abstained from pursuing this. 
Indeed, if we would have ignored the $r$ dependence altogether, our proofs 
would have been simpler and shorter. We point out that we want to have estimates valid for $r>1$, not just for 
$r$ large. E.g., it is true that $1+\log r=\Ocal(\log r)$ as $r$ tends to infinity, but
not if we take $r>1$. Correct in this case is $1+\log(r)=\log(er)=\Ocal(\log(2r))$.\\
\indent Our proof of Theorem \ref{Thm2} has Theorem \ref{Thm3}
as a starting point. 
We provide some more details of the 
proofs in Section \ref{sketch} followed by the full proofs 
in the remaining sections.




\section{Sketch of the proofs}
\label{sketch}
To understand the proofs it is helpful to  
first get an idea of the proof of Theorem \ref{DM}.\\
\indent For any prime $p$ we define $f_p(x)$ to be the number of primes $q$ such
that $pq\le x$ and $p<q\le rp$. An easy computation then yields
\begin{equation}
\label{eq8}
C_r(x)=\sum_{p\le x}f_p(x)=-\sum_{p\le \sqrt{x}}\pi(p)+\sum_{p\le \sqrt{x/r}}\pi(r p)
+\sum_{\sqrt{x/r}<p\le \sqrt{x}}\pi\left(\frac{x}{p}\right).
\end{equation}
The asymptotic behaviour of each of these three sums is then 
determined. As input not more than
the prime number theorem with error $\Ocal(x\log^{-3}x)$ is used (that is
the estimate~\eqref{liexp} with $n=3$).\\
\indent Our proof of Theorem \ref{Thm3} starts by noting
that (cf. the proof of \cite[Lemma 2]{Decker-Moree}) 
\begin{equation}
\label{analogue}
D_r(x)= -\sum\limits_{\substack{p\leq \sqrt{x}\\ p\in S_1}} 
\pi_{S_2}(p)
+ \sum\limits_{\substack{p\leq \sqrt{x/r}\\ p\in S_1}}\pi_{S_2}(rp)
+ \sum\limits_{\substack{\sqrt{x/r}<p\leq \sqrt{x}\\p\in S_1}}\pi_{S_2}\left(\frac{x}{p}\right).
\end{equation}
The first two sums in~\eqref{analogue} can be dealt with the same way since the second sum with $r=1$
is the negative of the first sum.
The idea is now to replace every $\pi_{S_2}(z)$ in~\eqref{analogue} by
a ${\rm Li}(z)/\delta_2$, thus producing a small error 
(by the assumption~\eqref{Sassumption}) and then to interchange
the order of integration and summation. In doing so terms
of the form $\pi_{S_1}(z)$ appear and those we replace by ${\rm Li}(z)/\delta_1$ (by 
assumption~\eqref{Sassumption} again at the cost of introducing a small error). We
thus obtain an approximation for $D_r(x)$ with main term $G_r(x)/(\delta_1\delta_2)$, 
where
\begin{equation}
\label{grrr}
G_r(x)=\frac{1}{2}{\rm Li}(\sqrt{x})^2-\int_{2r}^{\sqrt{rx}}\frac{{\rm Li}(t/r)}{\log t}dt+\int_{\sqrt{x}}^{\sqrt{rx}}\frac{{\rm Li}(x/t)}{\log t}dt.
\end{equation}
Taking the derivative of $G_r(x)$ with respect to $x$ then shows that
$G_r(x)=F_r(x)+\Ocal(r)$. This then completes the proof.\\
\indent In Theorem \ref{expansie} we try to 
obtain an expansion of the form  $\sum_{k=1}^{n-1}g_k(r)x\log^{-k-1}x$ 
for $G_r(x)$, with $g_k(r)$ to be determined. The key observation now is that $G_r'(x)=F_r'(x)$ is such
a simple function that an expansion of the form 
$\sum_{k=1}^{n-1}h_k(r)x\log^{-k-1}x$ for 
$G_r'(x)$ is easily found, where the $h_k(r)$ are readily determined. 
Subsequently one integrates this expansion termwise. This then 
shows that $g_k(r)=a_k(r)$ with $a_k(r)$ as defined in Theorem \ref{expansie}.
One has to take some care
to show that this termwise integration is actually allowed.\\
\indent Our first approach 
for establishing Theorem \ref{expansie}, 
was to substitute the expansion~\eqref{liexp} for 
Li$(z)$ in~\eqref{grrr}, leading to the conclusion that an 
expansion as in Theorem \ref{expansie}
exists. However, in this way complicated expressions for 
the polynomials 
$a_k(r)$ are obtained.
On computing various examples of those using 
Mathematica and studying the $j$-th coefficient
of $a_k(r)$ as a sequence using the On-Line Encyclopedia of Integer Sequences (OEIS), 
we made an explicit conjecture for the coefficients of $a_k(r)$ 
and eventually proved 
it by quite a different route.


\section{Proof of Theorem~\ref{Thm3}}
\subsection{Some lemmas}
In the analysis of the error term of our result, we make use of the following easy
estimates. The ones in part a) arise on replacing terms of the form $\pi_{S_2}(z)$
by ${\rm Li}(z)/\delta_2$, the ones in part b) on replacing terms of the form $\pi_{S_1}(z)$
by ${\rm Li}(z)/\delta_1$.
\begin{lem}
\label{lem10} Let $c>0$, $r>1$ and $0<\epsilon<1$. 
Put $c(\epsilon)=(1-\epsilon)c/\sqrt{2}$.\\
{\rm a)} We have
\begin{equation}
\label{eq15}
\sum_{p\leq \sqrt{x/r}}p e^{-c\sqrt{\log (rp)}}
\le \sum_{p\leq \sqrt{x}}p e^{-c\sqrt{\log p}}=\Ocal\left( xe^{-c(\epsilon)\sqrt{\log x}}\right)
\end{equation}
and
\begin{equation}
\label{eq16}
\sum_{\sqrt{x/r}<p\leq \sqrt{x}}\frac{x}{p}e^{-c\sqrt{\log (x/p)}}=\Ocal\left( xe^{-c(\epsilon)\sqrt{\log x}}\right).
\end{equation}
{\rm b)} The estimates~\eqref{eq15} and  \eqref{eq16} also hold true if we replace the sum
by an integral over the same range and $p$ by a continuous variable.
\end{lem}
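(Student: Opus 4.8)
The plan is to reduce everything to the monotonicity and size of the single function $g(t)=te^{-c\sqrt{\log t}}$, since both displayed sums are built out of it. First I would dispose of the inequality in \eqref{eq15}: because $r>1$ we have $\sqrt{x/r}\le\sqrt x$, so the left-hand index set is contained in the right-hand one, and $\log(rp)\ge\log p$ forces $e^{-c\sqrt{\log(rp)}}\le e^{-c\sqrt{\log p}}$; hence the comparison holds termwise and the left sum is at most the right one. It then remains to estimate $\sum_{p\le\sqrt x}g(p)$. A short computation gives $g'(t)=e^{-c\sqrt{\log t}}\bigl(1-\tfrac{c}{2\sqrt{\log t}}\bigr)$, so $g$ is increasing for $t>e^{c^2/4}$; the primes below this fixed threshold contribute only $\Ocal(1)$, and for the rest I would bound each term by the largest one, $g(\sqrt x)$. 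Using $\log\sqrt x=\tfrac12\log x$ we get $g(\sqrt x)=\sqrt x\,e^{-(c/\sqrt2)\sqrt{\log x}}$, and since $\pi(\sqrt x)\le\sqrt x$ the whole sum is $\Ocal\bigl(x\,e^{-(c/\sqrt2)\sqrt{\log x}}\bigr)$.

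For \eqref{eq16} I would exploit the location of the summation range: if $\sqrt{x/r}<p\le\sqrt x$ then $\sqrt x\le x/p<\sqrt{rx}$, so on the one hand $\log(x/p)\ge\tfrac12\log x$, giving the uniform bound $e^{-c\sqrt{\log(x/p)}}\le e^{-(c/\sqrt2)\sqrt{\log x}}$, and on the other hand $x/p<\sqrt{rx}$. As there are at most $\sqrt x$ primes in the range, the sum is at most $\sqrt x\cdot\sqrt{rx}\cdot e^{-(c/\sqrt2)\sqrt{\log x}}=\sqrt r\,x\,e^{-(c/\sqrt2)\sqrt{\log x}}$.

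In both cases the natural exponent produced is $c/\sqrt2$, whereas the statement only asks for $c(\epsilon)=(1-\epsilon)c/\sqrt2$. This gap is exactly what makes the estimates clean: writing $e^{-(c/\sqrt2)\sqrt{\log x}}=e^{-c(\epsilon)\sqrt{\log x}}\cdot e^{-\epsilon(c/\sqrt2)\sqrt{\log x}}$, the reserved factor $e^{-\epsilon(c/\sqrt2)\sqrt{\log x}}$ tends to zero faster than any fixed power of $r$ (and than the bounded constants), so it absorbs the stray $\sqrt r$ in \eqref{eq16} and yields the asserted $\Ocal\bigl(x\,e^{-c(\epsilon)\sqrt{\log x}}\bigr)$. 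Part b) is proved by the identical estimates, with ``number of primes in the range'' replaced by ``length of the interval'' (also at most $\sqrt x$) and the termwise maximum replaced by the supremum of the integrand over the range. The main obstacle here is bookkeeping rather than substance: one must keep track of the factor $1/\sqrt2$ coming from $\log\sqrt{\,\cdot\,}=\tfrac12\log(\cdot)$ and verify that the contribution below the monotonicity threshold $e^{c^2/4}$ is genuinely $\Ocal(1)$; no input deeper than the trivial bound $\pi(t)\le t$ is needed.
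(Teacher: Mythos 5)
Your proof is correct, but it takes a genuinely different route from the paper's in both halves of part a). For \eqref{eq15} the paper splits the sum at $x^{\frac12(1-\epsilon)^2}$: the initial segment is bounded trivially (it is $\le x^{(1-\epsilon)^2}$, which decays faster than any $e^{-c'\sqrt{\log x}}$ relative to $x$), while on the tail $\log p>\tfrac12(1-\epsilon)^2\log x$ gives $e^{-c\sqrt{\log p}}\le e^{-c(\epsilon)\sqrt{\log x}}$ termwise; the $(1-\epsilon)$ loss in the exponent is thus built into the choice of split point. Your argument via monotonicity of $g(t)=te^{-c\sqrt{\log t}}$ beyond $e^{c^2/4}$, with the trivial count $\pi(\sqrt x)\le\sqrt x$, avoids any split and actually yields the sharper exponent $c/\sqrt2$, so $\epsilon$ plays no role at all in your version of \eqref{eq15} --- a small genuine improvement on the paper's intermediate bound. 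For \eqref{eq16} the divergence is more consequential: the paper keeps the factor $1/p$, extends the sum to all $p\le\sqrt x$, and invokes $\sum_{p\le z}p^{-1}=\Ocal(\log\log z)$, obtaining a bound $x e^{-\frac{c}{\sqrt2}\sqrt{\log x}}\log\log x$ that is \emph{uniform in $r$}, with the $\epsilon$-slack absorbing the $\log\log x$; you instead bound each term by $\sqrt{rx}\,e^{-(c/\sqrt2)\sqrt{\log x}}$ and count primes trivially, producing a stray $\sqrt r$ that you absorb into the implied constant. Since the lemma fixes $r$ this is legitimate, and since $\sqrt r\le r$ your bound still fits uniformly inside the error term $\Ocal\left(rxe^{-c(\epsilon)\sqrt{\log x}}\right)$ of Theorem \ref{Thm3}, so nothing breaks downstream; but note that the authors are deliberately careful about the $r$-aspect of their estimates (see their remarks following Corollary \ref{crcor}), and their proof of \eqref{eq16} is $r$-free where yours is not. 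Your transfer to part b) (interval length in place of prime count, supremum in place of maximal term) matches the paper's one-line remark that the proof of b) is similar.
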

\begin{proof}
We only prove part a), the proof of b) being similar.\\
\indent The first inequality is obvious.
Now notice that
\begin{equation*}
\sum_{p\leq \sqrt{x}}p e^{-c\sqrt{\log p}}
\leq 
\sum_{p\leq x^{\frac{1}{2}(1-\epsilon)^2}}p+\sum_{x^{\frac{1}{2}(1-\epsilon)^2}<p\leq \sqrt{x}}pe^{-c(\epsilon)\sqrt{\log x}}=\Ocal\left( xe^{-c(\epsilon)\sqrt{\log x}}\right).
\end{equation*}
The proof of estimate 
\eqref{eq16} follows immediately from the observation
\begin{equation*}
\sum_{\sqrt{x/r}<p\leq \sqrt{x}}\frac{x}{p}e^{-c\sqrt{\log (x/p)}}
\leq 
xe^{-\frac{c}{\sqrt{2}}\sqrt{\log x}}\sum_{p\leq \sqrt{x}}\frac{1}{p}
=
\Ocal\left( xe^{-c(\epsilon)\sqrt{\log x}}\right),
\end{equation*}
where we used that $\sum_{p\le z}p^{-1}=\Ocal(\log \log z)$.
\end{proof}
The sums in the next two lemmas arise on replacing $\pi_{S_1}$ by Li in the
second and third sum as appearing in (\ref{analogue}).
\begin{lem}
\label{sumoneandtwo}
Let $r\ge 1$ be an arbitrary fixed real number and $S_1$ any set of primes. Then
\begin{eqnarray*}
\sum_{p\leq \sqrt{x/r}\atop p\in S_1}{\rm Li}(rp)
&=& 
\pi_{S_1}(\sqrt{x/r}){\rm Li}(\sqrt{rx})
-
\int\limits_{2}^{\sqrt{rx}}\frac{\pi_{S_1}(t/r)}{\log t}\, dt.
\end{eqnarray*}
\end{lem}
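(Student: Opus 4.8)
The statement is an exact identity, with no error term to control, so the plan is simply to derive it by partial summation (Abel's summation formula) followed by a change of variables. First I would apply Abel summation to $\sum_{p\le \sqrt{x/r},\,p\in S_1}{\rm Li}(rp)$, regarding it as $\sum_p g(p)$ against the counting measure $d\pi_{S_1}(t)$, with $g(t)={\rm Li}(rt)$. Since ${\rm Li}'(u)=1/\log u$, the chain rule gives $g'(t)=r/\log(rt)$. Because every prime exceeds $3/2$ and $r\ge 1$, I may start the summation at a point just below $2$, where $\pi_{S_1}$ vanishes, so no boundary term appears at the lower end. Abel's formula then yields
$$\sum_{p\le \sqrt{x/r},\,p\in S_1}{\rm Li}(rp)=\pi_{S_1}(\sqrt{x/r})\,{\rm Li}\big(r\sqrt{x/r}\big)-\int \pi_{S_1}(t)\,\frac{r}{\log(rt)}\,dt,$$
the integral running from (just below) $2$ up to $\sqrt{x/r}$.

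The first term already matches the claim once I note the algebraic identity $r\sqrt{x/r}=\sqrt{r}\sqrt{x}=\sqrt{rx}$, so that ${\rm Li}(r\sqrt{x/r})={\rm Li}(\sqrt{rx})$. For the integral I substitute $u=rt$, $du=r\,dt$: the factor $r$ cancels, $\log(rt)$ becomes $\log u$, and the upper limit $\sqrt{x/r}$ maps to $r\sqrt{x/r}=\sqrt{rx}$, producing $\int \pi_{S_1}(u/r)/\log u\,du$ with upper limit $\sqrt{rx}$.

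The only point requiring a little care, and the main (minor) obstacle, is reconciling the lower limit of the transformed integral with the value $2$ claimed in the statement. After the substitution the lower limit sits slightly below $2r$, whereas the target integral begins at $2$. However, for $u<2r$ one has $u/r<2$, and since the smallest prime is $2$ this gives $\pi_{S_1}(u/r)=0$; hence the integrand vanishes on the entire interval below $2r$, and the lower limit may be taken to be $2$ (indeed any value $\le 2r$) without changing the integral. This produces exactly $\int_2^{\sqrt{rx}}\pi_{S_1}(t/r)/\log t\,dt$ and completes the identity.

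Finally I would verify that $g(t)={\rm Li}(rt)$ is genuinely $C^1$ throughout the integration range, so that the partial summation is legitimate. The potential worry is the singularity of $1/\log(rt)$ at $rt=1$; but with $r\ge 1$ the argument $rt$ stays bounded away from $1$ on the part of the range where $\pi_{S_1}$ is nonzero, so this causes no difficulty, and the $r=1$ case is recovered verbatim.
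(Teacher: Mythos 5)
Your proof is correct, but it takes a different route from the one the paper actually writes out. The paper's proof expands each summand as ${\rm Li}(rp)=\int_2^{rp}dt/\log t$ and interchanges the sum with the integral, so that the integrand becomes the counting function $A_{S_1}(t)=\#\{p\le\sqrt{x/r}:rp\ge t,\ p\in S_1\}=\pi_{S_1}(\sqrt{x/r})-\pi_{S_1}(t/r)$ for $2\le t\le\sqrt{rx}$; splitting off the constant part $\pi_{S_1}(\sqrt{x/r})$ yields the boundary term, with no derivatives or substitutions ever appearing. Your argument --- Abel summation against $d\pi_{S_1}$ with $g(t)={\rm Li}(rt)$, $g'(t)=r/\log(rt)$, followed by the linear change of variables $u=rt$ --- is precisely the alternative the authors gesture at immediately after Lemma \ref{sumthree} (``partial integration and making an obvious linear transformation in the resulting integral'', details left to the reader), and you supply those details soundly: the identity $r\sqrt{x/r}=\sqrt{rx}$ fixes the boundary term, the substitution cancels the factor $r$ and converts $\log(rt)$ to $\log u$, and your observation that $\pi_{S_1}(u/r)=0$ for $u<2r$ legitimately lowers the limit of integration to $2$ (using $r\ge1$ so that $2\le 2r$). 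Your closing check that $rt$ stays bounded away from the singularity of $1/\log$ is the one technical point the partial-summation route forces on you; the paper's interchange-of-summation argument buys freedom from any such regularity concerns, since it is pure counting, while your route is arguably the more mechanical and generalizable reflex (it would apply verbatim with ${\rm Li}$ replaced by any $C^1$ weight).
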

\begin{proof}
We find that
\begin{eqnarray*}
\sum_{p\leq \sqrt{x/r}\atop p\in S_1}{\rm Li}(rp)
= 
\sum_{p\leq \sqrt{x/r}\atop p\in S_1}\int\limits_{2}^{rp}\frac{dt}{\log t}
=
\int\limits_{2}^{\sqrt{rx}}\frac{A_{S_1}(t)}{\log t}\, dt,
\end{eqnarray*}
where $A_{S_1}(t)= \#\{p\leq \sqrt{x/r} \, : \, rp\geq t,~p\in S_1 \}$. 
The result easily follows on noting 
that 
$A_{S_1}(t) = \pi_{S_1}(\sqrt{x/r})-\pi_{S_1}(t/r)$ for $2\le t\le \sqrt{rx}$.
\end{proof}
\begin{lem}
\label{sumthree}
Let $r>1$ be an arbitrary fixed real number and $S_1$ any set of primes. Then
\begin{equation*}
\sum_{\sqrt{x/r}<p\leq \sqrt{x}\atop p\in S_1}{\rm Li}\left( \frac{x}{p}\right)= 
\pi_{S_1}(\sqrt{x}){\rm Li}(\sqrt{x})
-\pi_{S_1}(\sqrt{x/r}){\rm Li}( \sqrt{rx})
+\int\limits_{\sqrt{x}}^{\sqrt{rx}}\frac{\pi_{S_1}(x/t )}{\log t}\, dt
\end{equation*}
\end{lem}
\begin{proof}
Note that
\begin{equation}
\sum_{\sqrt{x/r}<p\leq \sqrt{x}\atop p\in S_1}{\rm Li}\left( \frac{x}{p}\right)
= \sum_{\sqrt{x/r}<p\leq \sqrt{x}\atop p\in S_1}\int\limits_{2}^{x/p}\frac{dt}{\log t}
=
\int\limits_{2}^{\sqrt{rx}}\frac{B_{S_1}(t)}{\log t}\, dt,
\end{equation}
where
$B_{S_1}(t)= \# \{ \sqrt{x/r}<p \leq \sqrt{x} \, : \, 2\leq t\leq x/p,~p\in S_1 \}$.
Clearly
\begin{equation*}
B_{S_1}(t)=
\begin{cases}
  \displaystyle{\pi_{S_1}(\sqrt{x})-\pi_{S_1}( \sqrt{x/r})} \quad  &\textrm{if $ 2\leq t\leq \sqrt{x}$}; \\
  \displaystyle{\pi_{S_1}(x/t)-\pi_{S_1}( \sqrt{x/r})} \quad  &\textrm{if $\sqrt{x}<t\leq \sqrt{rx} $}.
\end{cases}
\end{equation*}
From this the result easily follows.
\end{proof}
Alternatively the lemmas \ref{sumoneandtwo} and \ref{sumthree} can be  
also proved by using partial integration and making
an obvious linear transformation in the resulting integral. (The details are left to the
interested reader.)
\subsection{Proof Theorem~\ref{Thm3}}
Recall that, for $r>1$, $G_r(x)$ is defined by 
\begin{equation}
\label{fr(x)}
G_r(x)=\frac{1}{2}{\rm Li}(\sqrt{x})^2-\int_{2r}^{\sqrt{rx}}\frac{{\rm Li}(t/r)}{\log t}dt+\int_{\sqrt{x}}^{\sqrt{rx}}\frac{{\rm Li}(x/t)}{\log t}dt.
\end{equation}
In our proof of Theorem~\ref{Thm3} we will make use of the following observation.
\begin{lem}
\label{grderivative}
We have $G_r'(x)=\frac{1}{\log x} \left( \log \log \sqrt{rx}-\log \log \sqrt{x/r}\right).$
\end{lem}
\begin{proof}
The derivative of the first term on the right hand side of~\eqref{fr(x)} is 
\begin{equation*}
\frac{d}{dx} \left( \frac{1}{2}{\rm Li}(\sqrt{x})^2\right)=\frac{{\rm Li}(\sqrt{x})}{\sqrt{x}\log x }.
\end{equation*}
The derivative of the first and the second integral on the right hand side 
of~\eqref{fr(x)} equals
\begin{equation*}
\frac{d}{dx} \left(\int_{2r}^{\sqrt{rx}}\frac{{\rm Li}(t/r)}{\log t}dt\right)
=
\frac{{\rm Li}(\sqrt{x/r})}{2\sqrt{x/r}\log \sqrt{rx}},
\end{equation*}
respectively 
\begin{equation*}
\frac{d}{dx}\left(\int_{\sqrt{x}}^{\sqrt{rx}}\frac{{\rm Li}(x/t)}{\log t}dt\right)
=
\frac{{\rm Li}(\sqrt{x/r})}{2\sqrt{x/r}\log \sqrt{rx}}
-\frac{{\rm Li}(\sqrt{x})}{\sqrt{x}\log x }
+
\int_{\sqrt{x}}^{\sqrt{rx}}\frac{dt}{t \log t \log (x/t)}.
\end{equation*}
On adding them we get 
\begin{equation*}
{G^\prime}_r(x)= \int_{\sqrt{x}}^{\sqrt{rx}}\frac{dt}{t \log t \log (x/t)}.
\end{equation*}
Note that
\begin{equation*}
G_r'(x)=
\frac{1}{\log x} \int_{\sqrt{x}}^{\sqrt{rx}}\frac{dt}{t\log t}+\frac{1}{\log x} \int_{\sqrt{x}}^{\sqrt{rx}}\frac{dt}{t\log (x/t)}.
\end{equation*}
By making a simple change of variable $t=x/v$ in the second integral on the right hand side above, we 
obtain
\begin{equation*}
G_r'(x)=
\frac{1}{\log x} \int_{\sqrt{x/r}}^{\sqrt{rx}}\frac{dv}{v\log v}=\frac{1}{\log x} \left( \log \log \sqrt{rx}-\log \log \sqrt{x/r}\right),
\end{equation*}
thus concluding the proof.
\end{proof}

\begin{proof}[Proof of Theorem {\rm \ref{Thm3}}]
In~\eqref{analogue} we replace every
term $\pi_{S_2}(z)$ by the estimate given in (\ref{Sassumption}) 
and invoke Lemma \ref{lem10} a) to bound the resulting sums of 
error estimates giving rise to the asymptotic formula
\begin{equation}
\label{eq9}
\delta_2D_r(x)= -\sum\limits_{\substack{p\leq \sqrt{x}\\ p\in S_1}} 
{\rm Li}(p)
+ \sum\limits_{\substack{p\leq \sqrt{x/r}\\ p\in S_1}}{\rm Li}(rp)
+ \sum\limits_{\substack{\sqrt{x/r}<p\leq \sqrt{x}\\p\in S_1}}{\rm Li}\left(\frac{x}{p}\right)+\Ocal\left(rx e^{-c(\epsilon)\sqrt{\log x}}\right).  
\end{equation}
From~\eqref{eq9}, Lemmas \ref{sumoneandtwo}, \ref{sumthree} 
and the observation that $\pi_{S_1}(z)=0$ for $z<2$
we infer that
$$\delta_2 D_r(x)=\int_2^{\sqrt{x}}\frac{\pi_{S_1}(t)}{\log t}dt-\int_{2r}^{\sqrt{rx}}\frac{\pi_{S_1}(t/r)}{\log t}dt+\int_{\sqrt{x}}^{\sqrt{rx}}\frac{\pi_{S_1}(x/t)}{\log t}dt
+\Ocal\left(rx e^{-c(\epsilon)\sqrt{\log x}}\right).$$
By partial integration, 
\begin{equation}
\label{simpleintegration}
\int_2^{\sqrt{x}}\frac{{\rm Li}(t)}{\log t}dt=\frac{1}{2}{\rm Li}(\sqrt{x})^2.
\end{equation}
Using this we see that if in the three integrals 
appearing in~\eqref{eq9} we replace $\pi_{S_1}$
by $\delta_1^{-1}{\rm Li}$ we obtain $G_r(x)/\delta_1$. Using Lemma \ref{lem10} b) we estimate
the sum of the errors made on making this replacement and conclude 
that as $x$ tends to infinity we have
\begin{equation}
\label{vorige}
\delta_1\delta_2D_r(x)=G_r(x)+
\Ocal\left( rxe^{-c(\epsilon)\sqrt{\log x}}\right).
\end{equation}
Using Lemma \ref{grderivative} 
we notice that
$G_r(x)-G_r(2r)=F_r(x)$ for $x\ge 2r$. 
Using some rough estimates on finds that $G_r(2r)=O(r)$ and
hence we infer that $G_r(x)=F_r(x)+O(r)$.
The proof is 
concluded on inserting this estimate in~\eqref{vorige}. 
\end{proof}
\section{Proof of Theorem \ref{Thm2}}
We will make use of the following lemma.
\begin{lem}
\label{stoptnooit}
Let $\mathfrak{b}=\{b_j\}_{j=1}^{\infty}$ be a sequence of 
non-negative real numbers and $n\ge 2$ an arbitrary integer.
We define
\begin{equation}
\label{Nb}
N_{\mathfrak{b}}(x):=\sum_{j=1}^{n-1}b_j\int_2^x\frac{dt}{\log ^j t}.
\end{equation}
As $x$ tends to infinity we have
$$N_{\mathfrak{b}}(x)=\sum_{k=1}^{n-1}\Big(\sum_{j=1}^{k}b_j
\frac{(k-1)!}{(j-1)!}\Big)\frac{x}{\log ^k x}+\Ocal_n\left(\frac{xB_n}{\log ^n x}\right),$$
where $B_n=\sum\limits_{j=1}^n b_j$.
\end{lem}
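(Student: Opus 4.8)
The plan is to prove Lemma \ref{stoptnooit} by establishing the single-term asymptotic expansion
\begin{equation*}
\int_2^x\frac{dt}{\log^j t}=\sum_{k=j}^{n-1}\frac{(k-1)!}{(j-1)!}\frac{x}{\log^k x}+\Ocal_n\left(\frac{x}{\log^n x}\right)
\end{equation*}
for each fixed $j$ with $1\le j\le n-1$, and then summing over $j$ with the weights $b_j$. The key mechanism is repeated integration by parts: differentiating $1/\log^j t$ produces $-j/\log^{j+1}t$, so
$$\int_2^x\frac{dt}{\log^j t}=\frac{x}{\log^j x}-\frac{2}{\log^j 2}+j\int_2^x\frac{dt}{\log^{j+1}t},$$
which raises the exponent by one at the cost of a boundary term and a constant. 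Iterating this identity $n-j$ times peels off the terms $x/\log^j x,\ j\cdot x/\log^{j+1}x,\ j(j+1)\cdot x/\log^{j+2}x,\dots$, and one checks that the product of the successive factors $j,(j+1),\dots,(k-1)$ appearing before the term $x/\log^k x$ is exactly $(k-1)!/(j-1)!$. The iteration terminates with a remaining integral $\int_2^x\log^{-n}t\,dt$, which by \eqref{zoflauw} is $\Ocal_n(x/\log^n x)$; the accumulated boundary constants are $\Ocal_n(1)=\Ocal_n(x/\log^n x)$ as well.

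First I would fix $j$ and run the integration-by-parts recursion carefully, tracking the coefficient in front of $x/\log^k x$; a short induction shows this coefficient is $\prod_{i=j}^{k-1} i=(k-1)!/(j-1)!$, with the convention that the empty product (when $k=j$) equals $1$, consistent with the leading term $x/\log^j x$. Next I would substitute this expansion into the definition \eqref{Nb} of $N_{\mathfrak b}(x)$ and interchange the two finite sums. Collecting the coefficient of $x/\log^k x$ for each $k\in\{1,\dots,n-1\}$, only those $j$ with $j\le k$ contribute (since the inner sum starts at $k=j$), giving precisely $\sum_{j=1}^k b_j\,(k-1)!/(j-1)!$, which is the stated main term.

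For the error term I would control the sum of the $n-1$ individual $\Ocal_n(x/\log^n x)$ remainders. Since each term $b_j\int_2^x\log^{-j}t\,dt$ contributes a remainder bounded by $C_n\,b_j\,x/\log^n x$ for a constant $C_n$ depending only on $n$ (the constant absorbs both the tail integral and the boundary constants, uniformly in $j\le n-1$), summing over $j$ yields $\Ocal_n\big(x\log^{-n}x\sum_{j=1}^{n-1}b_j\big)$, and bounding $\sum_{j=1}^{n-1}b_j\le\sum_{j=1}^{n}b_j=B_n$ gives the claimed $\Ocal_n(xB_n/\log^n x)$. The main obstacle, though purely bookkeeping rather than conceptual, is verifying that the implied constant in the per-term remainder can be chosen uniformly in $j$ so that the final dependence collapses cleanly onto $B_n$; this follows because there are only finitely many values of $j$ (at most $n-1$) and $n$ is fixed, so one simply takes the maximum of the finitely many constants produced by \eqref{zoflauw}. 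Non-negativity of the $b_j$ is used only to make the crude bound $\sum_{j=1}^{n-1}b_j\le B_n$ transparent.
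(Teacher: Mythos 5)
Your proposal is correct and takes essentially the same approach as the paper: the paper likewise proves the single-term expansion of $\int_2^x dt/\log^j t$ by repeated partial integration (stated there with a flexible truncation length $n_j$, chosen as $n_j=n+1-j$), inserts it into \eqref{Nb}, and rearranges the double sum, with the error collapsing onto $B_n$ exactly as in your uniformity argument. The only cosmetic difference is that you truncate each expansion at $k=n-1$ with remainder $\Ocal_n(x/\log^n x)$, whereas the paper carries the $k=n$ term and absorbs it into the error afterwards.
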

\begin{proof}
By partial integration one finds, with $n_j\ge 1$ an arbitrary integer,
\begin{equation*}
\int_2^x \frac{dt}{\log^j t}=\sum_{m=1}^{n_j} \frac{(j+m-2)!}{(j-1)!}
\frac{x}{\log^{j+m-1}x}+\Ocal_{n_j}\left(\frac{x}{\log^{j+n_j} x}\right).
\end{equation*}
For $j=1,\ldots,n-1$ we insert this in~\eqref{Nb} and take, e.g., $n_j=n+1-j$. Rearranging
terms then yields the result.
\end{proof}
\begin{proof}[Proof of Theorem {\rm \ref{Thm2}}]
Let $ |u|<1. $ Using the Taylor series
\[\log (1-u)=-\sum_{\ell=1}^{\infty}\frac{u^\ell}{\ell},\]
we conclude that
\begin{equation}
\label{taylortje}
\log\left( \frac{1+u}{1-u} \right)=2\sum_{\ell=1}^{\infty}\frac{u^{2\ell-1}}{2\ell-1}. 
\end{equation}
Define $$E_m(u)= \log\left( \frac{1+u}{1-u} \right)-2\sum_{\ell=1}^{m}\frac{u^{2\ell-1}}{2\ell-1}.$$
Note that 
\begin{equation}
\label{emuestimate}
0<E_m(u)=\frac{2u^{2m+1}}{2m+1}+\frac{2u^{2m+3}}{2m+3}+\cdots< 
2\sum_{k=1}^{\infty}u^{2m+2k-1}=\frac{2u^{2m+1}}{1-u^2}\quad\text{for~}0<u<1.
\end{equation}
Clearly
$$F_r'(x)=\frac{\log \log \sqrt{rx}-\log \log \sqrt{x/r}}{\log x}
=\frac{1}{\log x} \left(\log\left(1+\frac{\log r}{\log x}\right)-\log\left(1-\frac{\log r}{\log x}\right)\right).$$
Recall that by assumption $x\ge 2r$. 
For those $x$ we find by~\eqref{taylortje} the Taylor series
$$F_r'(x)=
\frac{1}{\log x} \sum_{\ell=1}^{\infty}\frac{2}{2\ell-1} \left(\frac{\log r}{\log x} \right)^{2\ell-1}.$$
Using the definition of $E_m(u)$ it now follows that
\begin{equation}
\label{benerbijna3}
F_r(x)=\int_{2r}^x F_r'(t)dt =\sum_{\ell=1}^m\frac{2}{2\ell-1}\log^{2\ell-1}r\int_{2r}^x \frac{dt}{\log^{2\ell}t}
+\int_{2r}^{x}\frac{1}{\log t}E_m\left(\frac{\log r}{\log t}\right)dt.
\end{equation}
{}From~\eqref{emuestimate} we infer that, for $ x\ge 2r, $
\begin{equation*}
0\le\int_{2r}^{x}\frac{1}{\log t}E_m\left( \frac{\log r}{\log t}\right)dt<\frac{2\log^{2m+1}r}{1-\left( \frac{\log r}{\log (2r)}\right) ^2}\int_{2r}^{x}
\frac{dt}{\log^{2m+2}t}.
\end{equation*}
Here, we note that 
\begin{equation*}
\frac{1}{1-\left( \frac{\log r}{\log (2r)}\right) ^2}=\frac{\log^2 (2r)}{(\log 2)\log (2r^2)}=\Ocal\left( \log (2r)\right).
\end{equation*}
We conclude that
\begin{equation}
F_r(x)= \sum_{\ell=1}^m\frac{2}{2\ell-1}\log^{2\ell-1}r\int_{2r}^x \frac{dt}{\log^{2\ell}t}
+\Ocal_m\left( \frac{x\log (2r)\log^{2m+1}r}{\log^{2m+2}x}\right),
\end{equation}
which can be rewritten as
\begin{equation*}
F_r(x) =\sum_{\ell=1}^m\frac{2}{2\ell-1}\log^{2\ell-1}r\int_{2}^x \frac{dt}{\log^{2\ell}t}
+\Ocal_m\left(\frac{r}{\log(2r)}\right)+\Ocal_m\left( \frac{x\log (2r)\log^{2m+1}r}{\log^{2m+2}x}\right),
\end{equation*}
where we used \eqref{zoflauw} to estimate $\int_2^{2r}dt/\log^{2\ell}t$. On
noting that $r(\log(2r))^{-2}\log^{-2m-1}r$ is eventually increasing in
$r$ and $r\le x/2$ we see that
$$\frac{r}{\log(2r)}=\Ocal_m\left( \frac{x\log (2r)\log^{2m+1}r}{\log^{2m+2}x}\right),$$
and therefore we have
\begin{equation}
\label{benerbijna}
F_r(x) =\sum_{\ell=1}^m\frac{2}{2\ell-1}\log^{2\ell-1}r\int_{2}^x \frac{dt}{\log^{2\ell}t}+\Ocal_m\left( \frac{x\log (2r)\log^{2m+1}r}{\log^{2m+2}x}\right),
\end{equation}
Lemma \ref{stoptnooit} applied with
$$
b_j=\begin{cases}
\frac{2}{j-1}\log^{j-1}r & \textrm{if $j$ is even};\\
0 & \textrm{otherwise,}
\end{cases}
$$
gives 
\begin{equation}
\label{spuugzat}
\sum_{\ell=1}^m\frac{2}{2\ell-1}\log^{2\ell-1}r\int_{2}^x \frac{dt}{\log^{2\ell}t}=\sum_{k=1}^{2m}v_k(r)\frac{x}{\log^{k+1} x}+
\Ocal_m\left( \frac{x(\log r)\log^{2m-2}(2r)}{\log^{2m+2}x}\right),
\end{equation}
where 
$$v_k(r)=\sum_{j=1}^{[\frac{k+1}{2}]}\frac{k!}{(2j-1)!}\frac{2\log^{2j-1}r}{2j-1}
=a_k(r).$$
On combining \eqref{spuugzat} with \eqref{benerbijna} the proof is then easily completed in 
case $n=2m+1$ is odd. (Observe that the error term in \eqref{spuugzat} is
majorized by the one in \eqref{benerbijna}.)\\
\indent On noting that $a_{2m}(r)$ is an odd polynomial in $\log r$, we
see that for all $r>1$ we have $a_{2m}(r)=\Ocal_m((\log r)\log^{2m-2}(2r))$.
Therefore
$$\frac{a_{2m}(r)x}{\log^{2m+1}x}=\Ocal_m\Big(x\frac{(\log r)\log^{2m-2}(2r)}{\log^{2m+1}x}\Big),$$
and it follows from \eqref{spuugzat} that 
\begin{equation}
\label{spuugzat2}
\sum_{\ell=1}^m\frac{2}{2\ell-1}\log^{2\ell-1}r\int_{2}^x \frac{dt}{\log^{2\ell}t}=\sum_{k=1}^{2m-1}a_k(r)\frac{x}{\log^{k+1} x}+
\Ocal_m\left( \frac{x(\log r)\log^{2m-2}(2r)}{\log^{2m+1}x}\right).
\end{equation}
On combining \eqref{spuugzat2} with \eqref{benerbijna} the proof 
is then also completed in the 
remaining case where $n=2m$ is even. 
\end{proof}
\subsection{Integrality of the coefficients of the polynomial $a_k(r)$}
Recall that 
$$
a_k(r)=\sum_{j=1}^{[(k+1)/2]}a_{k,j}\log^{2j-1} r,
$$
with 
$$a_{k,j}=\frac{k!2}{(2j-1)!(2j-1)}.$$
On being confronted with Table 1 the reader might wonder about the integrality of the
coefficients $a_{k,j}$. The following result is easy to prove.
\begin{prop} Define $\mu(j)=\min\{k\ge 2j-1:(2j-1)!(2j-1)|k!\}$.\\
{\rm a)} We have $\mu(j)\le 4j-2$ with equality if and only if $2j-1$ is a prime number.\\
{\rm b)} The coefficient $a_{k,j}$ is an integer if and only if $k\ge \mu(j)$.\\
{\rm c)} Suppose that $2j-1=\prod_{p|2j-1}p^{e_p}$, with all exponents $e_p\le p$.
Then $$\mu(j)=2j-1+\max\{e_pp:p|2j-1\}.$$
\end{prop}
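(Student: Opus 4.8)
Throughout I would write $m=2j-1$, an odd integer, and let $v_p$ denote the $p$-adic valuation. Since $a_{k,j}=2k!/(m!\,m)$ and this coefficient is only meaningful for $j\le\lfloor(k+1)/2\rfloor$, i.e. $k\ge m$, I would carry the standing assumption $k\ge m$. The first goal is part b). Integrality of $a_{k,j}$ is equivalent to $m!\,m\mid 2k!$, whereas the condition defining $\mu(j)$ is $m!\,m\mid k!$, so the plan is to show that the extra factor $2$ is harmless. Because $m$ is odd, the prime $2$ contributes only $v_2(m!)$ to $m!\,m$, and for $k\ge m$ one has $v_2(k!)\ge v_2(m!)$; hence the prime $2$ imposes no binding constraint in either divisibility, and $m!\,m\mid 2k!$ holds if and only if $m!\,m\mid k!$. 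Since $v_p(k!)$ is nondecreasing in $k$ for every $p$, the set of $k\ge m$ with $m!\,m\mid k!$ equals $\{k\ge\mu(j)\}$, which is exactly b).

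For the remaining parts I would reduce $m!\,m\mid k!$ to one constraint per prime. For $p$ not dividing $m$ (including $p=2$) the requirement $v_p(k!)\ge v_p(m!)$ already holds for $k\ge m$, so only the primes $p\mid m$ (all odd) matter. For such $p$ set $e_p=v_p(m)$ and let $k_p$ be the least $k\ge m$ with $v_p(k!)\ge v_p(m!)+e_p$; then $\mu(j)=\max_{p\mid m}k_p$ (with $\mu(1)=m$ when $m=1$, the empty-product case). Writing $v_p(k!)-v_p(m!)=\sum_{m<\ell\le k}v_p(\ell)$ and noting that the multiples of $p$ beyond $m$ are $m+p,m+2p,\dots$ (as $p\mid m$), I obtain $v_p((m+ip)!)-v_p(m!)=\sum_{t=1}^{i}v_p(m+tp)$. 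Each summand is at least $1$, so $i=e_p$ already achieves the bound, giving the \emph{unconditional} estimate $k_p\le m+e_p p$; this is all that part a) will need.

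To prove the exact formula in c) I would show that, under the hypothesis $e_p\le p$, we have $k_p=m+e_p p$, i.e. that $e_p-1$ multiples do not suffice. Writing $m=p^{e_p}u$ with $\gcd(u,p)=1$, one has $v_p(m+tp)=1+v_p(p^{e_p-1}u+t)$. For $e_p\ge 2$ the term $p^{e_p-1}u$ is divisible by $p$, so $p^{e_p-1}u+t\equiv t\pmod p$; and for $t=1,\dots,e_p-1$ the bound $e_p\le p$ forces $1\le t<p$, whence $p$ does not divide $p^{e_p-1}u+t$ and $v_p(m+tp)=1$. Thus $\sum_{t=1}^{e_p-1}v_p(m+tp)=e_p-1<e_p$, so $m+(e_p-1)p$ fails and $k_p=m+e_p p$; the case $e_p=1$ is immediate since $m+p$ is the first multiple of $p$ past $m$. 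Taking the maximum gives $\mu(j)=m+\max_{p\mid m}e_p p$, which is c). I expect this to be the main obstacle: the hypothesis $e_p\le p$ is used precisely to guarantee that none of $m+p,\dots,m+(e_p-1)p$ is divisible by $p^2$, and this localisation is exactly where the stated hypothesis becomes indispensable.

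Finally, for a) I would first record the clean bound $m!\,m\mid(2m)!$, which follows since $\binom{2m}{m}$ is an integer gives $(m!)^2\mid(2m)!$ and $m\mid m!$ gives $m!\,m\mid (m!)^2$; hence $\mu(j)\le 2m$. For the equality I would combine the unconditional bound $\mu(j)\le m+\max_{p\mid m}e_p p$ with the elementary inequality $e_p p\le p^{e_p}$ for odd $p$, in which equality holds only when $e_p=1$. Since $p^{e_p}\le m$ with equality only when $m=p^{e_p}$, this yields $e_p p\le m$, with equality forcing $e_p=1$ and $m=p$, i.e. $m$ prime. Consequently $\mu(j)<2m$ whenever $m$ is composite or $m=1$, while for a prime $m=p$ the formula of c) applies (its hypothesis $1\le p$ holds) and gives $\mu(j)=p+p=2m$. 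This establishes a) and completes the proposition.
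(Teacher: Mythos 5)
Your proposal is correct and complete, but there is nothing in the paper to compare it against: the authors state the proposition with only the remark that it ``is easy to prove'' and supply no argument. Your route is the natural one, and every step checks out. The reduction in b) is sound: for $k\ge m=2j-1$ the prime $2$ is non-binding in both $m!\,m\mid k!$ and $m!\,m\mid 2k!$ (since $m$ is odd, $v_2(m!\,m)=v_2(m!)\le v_2(k!)$), so the two divisibilities agree, and upward closure follows from monotonicity of $v_p(k!)$. Your localisation $\mu(j)=\max_{p\mid m}k_p$ with the unconditional bound $k_p\le m+e_pp$ is correct, and the heart of c) --- that under $e_p\le p$ none of $m+p,\dots,m+(e_p-1)p$ is divisible by $p^2$, because $v_p(m+tp)=1+v_p(p^{e_p-1}u+t)$ and $p^{e_p-1}u+t\equiv t\not\equiv 0\pmod p$ for $1\le t\le e_p-1<p$ --- is exactly where the hypothesis must enter; one can check on $m=3^4$ (where $e_3=4>3$ and $k_3=90<93$) that the formula genuinely fails without it, so your identification of this as the crux is apt. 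For a), the bound $\mu(j)\le 2m$ via $m!\,m\mid(m!)^2\mid(2m)!$ is clean, and the equality analysis through $e_pp\le p^{e_p}\le m$ (equality forcing $e_p=1$ and $m=p$, with integrality giving $e_pp\le m-1$, hence $\mu(j)\le 2m-1$, in the composite case) is airtight, including the degenerate case $m=1$. The one cosmetic point worth making explicit if this were written up: c) tacitly requires $j\ge 2$, since for $m=1$ the maximum runs over an empty set, which you correctly flag as the empty-product convention $\mu(1)=1$.
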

\section{Bias in the sense of Chebyshev}
Let $\pi(x;d,a)$ denote the number of primes $p\le x$ that satisfy $p\equiv a({\rm mod~}d)$. 
We restrict ourselves to the case where $a$ and $d$ are coprime, the cases where the
residue class modulo $d$ is said to be primitive. It is the only relevant
case here as the non-primitive residue classes have only
finitely many primes in them.
It is a consequence of Legendre's theorem from 1837 that the primes are equidistributed over
the primitive residue classes modulo $d$. Nevertheless, certain differences
of the form $\pi(x;d,a_1)-\pi(x;d,a_2)$ are positive for many
values of $x$ (where ``many" is best
quantified using a logarithmic measure). This phenomenon was first observed and studied
by Chebyshev who found that there is a strong bias for primes to be $\equiv 3({\rm mod~}4)$
rather than $\equiv 1({\rm mod~}4)$. For a survey see Granville and
Martin \cite{GM}.\\
\indent Recently Ford and Sneed \cite{Ford-Sneed} 
and Xiangchang Meng \cite{XM}
considered bias for products of two, respectively
$k$
primes, with $k\ge 2$ and fixed.
\begin{Problem}
Study the Chebyshev bias phenomenon for products of two proportional
primes.
\end{Problem}
Here especially the case where the modulus $d=10$ is of relevance.

\subsection*{Acknowledgements.} 
A large portion of this paper was written during 
the second author's stay in May 2016 
at the Max Planck Institute for Mathematics (MPIM). She would like to thank Pieter Moree for inviting her and gratefully acknowledges support, hospitality 
as well as the excellent environment for collaboration at the 
MPIM. She is supported by the Austrian Science Fund (FWF): Project F5507-N26, which is a part of 
the Special Research Program ``Quasi Monte Carlo Methods: Theory and Applications".\\
\indent We would like to thank Florian Luca, Igor Shparlinski and G\'erald Tenenbaum for
helpful feedback. Furthermore we thank Alexandru Ciolan and Kate Kattegat for proofreading 
and Alex Weisse for his 
TeXnical help. The first author also acknowledges 
fruitful discussions with Yara Elias and
Wadim Zudilin.

{\small
}

\medskip\noindent Pieter Moree \par\noindent
{\footnotesize Max-Planck-Institut f\"ur Mathematik,
Vivatsgasse 7, D-53111 Bonn, Germany.\hfil\break
e-mail: {\tt moree@mpim-bonn.mpg.de}}

\medskip\noindent Sumaia Saad Eddin \par\noindent
{\footnotesize Institute of Financial Mathematics and Applied Number Theory, Altenbergerstrasse 69, 4040 Linz, Austria.
\hfil\break
e-mail: {\tt sumaia.saad\_eddin@jku.at}}

\end{document}